\documentclass{article}
\usepackage{amsmath,amsthm,amssymb}
\usepackage{latexsym}

\textwidth 12cm \textheight 20cm

\newcommand{\FF}{\mathbb{F}}

\setlength{\oddsidemargin}{-.0in}

\theoremstyle{theorem}
\newtheorem{theorem}{Theorem}[section]

\newtheorem{lemma}[theorem]{Lemma}
\newtheorem{proposition}[theorem]{Proposition}

\theoremstyle{defi}



\begin{document}

\title{\textbf{THE UNIT GROUP OF SMALL GROUP ALGEBRAS AND THE MINIMUM COUNTEREXAMPLE TO THE ISOMORPHISM PROBLEM}}

\author{Leo Creedon\\
Institute of Technology Sligo\\
Sligo, IRELAND\\
 e-mail: creedon.leo@itsligo.ie}
\date{}

\maketitle

\begin{abstract}
\noindent Let $KG$ denote the group algebra of the group $G$ over the field $K$ and let $U(KG)$ denote its group of units.
Here without the use of a computer we give presentations for the unit groups of all group algebras $KG$, where the size of $KG$ is less than $1024$.  
As a consequence we find the minimum counterexample to the Isomorphism Problem for group algebras.\\
{\bf AMS Subject Classification:} 16S34, 16U60, 20C05 \\
{\bf Key Words:} group ring, group algebra, unit group, isomorphism problem
\end{abstract}

\section{Introduction}

\noindent Let $KG$ denote the group algebra of the group $G$ over the
field $K$. Let $U(KG)$ denote its group of invertible elements. For
further details and background see \cite{Passman}.

In \cite{Sandling} presentations are given for $U({\FF}_2 G)$, where ${\FF}_2$ if the Galois field with 2 elements and 
$G$ is a group of order dividing 16.  Using this and a result of Higman \cite{Higman} and new results we provide a 
list of presentations of unit groups $U(KG)$ of all group algebras $KG$ with $|KG|<1024$.  This is done using algebraic techniques and without the use of a computer.
The motivation for seeking these presentations is the same as Sandling's \cite{Sandling}: 
to compile data for use in formulating conjectures and testing hypotheses.  
For example a look at the data here shows that
the Isomorphism Problem for group algebras is false and gives a minimum counterexample. 
In particular ${\FF}_5 C_4$ is isomorphic to ${\FF}_5 C_2 \times C_2$.  
These algebras of order 625 are the minimal group algebras with the property that $KG \cong KH$ with 
$G$ and $H$ not isomorphic.

\vspace{-3mm}

\subsection{Background}

Let $C_n$ denote the cyclic group with $n$ elements, let $D_n$ denote the dihedral group with $n$ elements and 
let $Q_8$ denote the quaternion group with $8$ elements. ${\FF}_{p^n}$ will denote the finite field with $p^n$ elements, 
where $p$ is a prime number. If $R$ and $S$ are rings then $R \oplus S$ denotes the direct sum of $R$ and $S$.  
If $G$ and $H$ are groups then $G \times H$ denotes the direct product of $G$ and $H$. 
If $R$ is a ring then $R^n$ denotes the direct sum of $n$ copies of $R$.  
If $G$ is a group then $G^n$ denotes the direct product of $n$ copies of $G$. If $\alpha$ is an element of a group ring $RG$
then let $aug(\alpha) \in R$ denote the augmentation of $\alpha$. 
$|R|$ is the size of $R$ and $exp (G)$ is the exponent of the group $G$.

The following result will be used throughout:

\begin{lemma}
$U({\FF}_{p^k} C_p^n) = C_p ^{kp^n - k} \times C_{p^k -1}$, where $p$ is any prime number.
\end{lemma}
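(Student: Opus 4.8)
The plan is to decompose $U(\FF_{p^k} C_p^n)$ via the augmentation homomorphism and then to show that the normalized unit group is elementary abelian by exploiting the Frobenius map in characteristic $p$. Write $K = \FF_{p^k}$ and $G = C_p^n$, and note first that $KG$ is commutative, so $U(KG)$ is abelian. Since $aug \colon KG \to K$ is a ring homomorphism, its restriction to units is a group homomorphism $U(KG) \to K^\times$ split by the inclusion of the scalar units $K^\times \hookrightarrow U(KG)$, $\lambda \mapsto \lambda \cdot 1_{KG}$. Writing $V(KG)$ for the kernel (the units of augmentation $1$), this yields $U(KG) \cong V(KG) \times K^\times$, and since $K$ is a finite field, $K^\times \cong C_{p^k - 1}$. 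It therefore remains to identify $V(KG)$ with $C_p^{kp^n - k}$.

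Next I would pin down the order of $V(KG)$. Because $G$ is a $p$-group and $\operatorname{char} K = p$, the algebra $KG$ is local with maximal ideal equal to the augmentation ideal $\omega = \ker(aug)$, so every element of augmentation $1$ is a unit and $V(KG) = 1 + \omega$. The map $w \mapsto 1 + w$ is a bijection $\omega \to V(KG)$, and $\omega$ is a $K$-subspace of codimension $1$ in the $p^n$-dimensional algebra $KG$. Hence $|V(KG)| = |\omega| = |K|^{p^n - 1} = p^{kp^n - k}$, matching the claimed order.

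The key step, and the part carrying the real content, is showing that $V(KG)$ has exponent $p$. Here I would use that $KG$ is commutative of characteristic $p$, so the Frobenius $x \mapsto x^p$ is a ring endomorphism. For $u = \sum_{g} a_g\, g \in V(KG)$ this gives
\[
u^p = \Big(\sum_{g \in G} a_g\, g\Big)^p = \sum_{g \in G} a_g^p\, g^p = \sum_{g \in G} a_g^p,
\]
where $g^p = 1$ because $G = C_p^n$ has exponent $p$. Applying Frobenius once more to the coefficients, $\sum_{g} a_g^p = \big(\sum_{g} a_g\big)^p = aug(u)^p = 1$, since $u$ is normalized. Thus $u^p = 1_{KG}$ for every $u \in V(KG)$.

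Finally, combining these facts, $V(KG)$ is a finite abelian group of exponent $p$ and order $p^{kp^n - k}$, hence elementary abelian, $V(KG) \cong C_p^{kp^n-k}$, and together with the splitting this gives $U(KG) \cong C_p^{kp^n - k} \times C_{p^k - 1}$. The one subtlety to handle with care is that $K$ need not be the prime field, so the exponent-$p$ conclusion must come from the \emph{double} application of Frobenius (to both the group elements and the field coefficients) rather than from any reduction to the prime-field case; everything else is bookkeeping.
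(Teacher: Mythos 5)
Your proof is correct and follows essentially the same route as the paper's: both hinge on the identity $u^p = \bigl(\sum_g a_g\bigr)^p = aug(u)^p$ obtained from the Frobenius map together with $\exp(C_p^n)=p$, deduce the order of the unit group from the fact that the units are exactly the elements of nonzero augmentation, and conclude from the exponent. Your write-up merely makes explicit the splitting $U(KG)\cong V(KG)\times K^\times$ and the elementary-abelian structure of $V(KG)$, steps the paper leaves implicit.
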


\begin{proof}
Let $\alpha = a_1 x_1 + a_2 x_2 + \ldots + a_{p^n} x_{p^n} \in {\FF}_{p^k} C_p^n$, 
where the elements $a_i$ are in the field ${\FF}_{p^k}$ and $\{ x_i|i=1,\ldots ,p^n\} $ is a listing of the elements of $C_p^n$.  

Then $\alpha ^p = a_1^p x_1^p + a_2^p x_2^p + \ldots + a_{p^n}^p x_{p^n}^p = (a_1 + a_2 + \ldots + a_{p^n})^p = aug(\alpha)^p$. 
Thus $|U({\FF}_{p^k} C_p^n)| = (p^k)^{p^n-1} (p^k -1) = (p^{kp^n-k})(p^k -1)$. 
Note that $U({\FF}_{p^k} C_p^n)$ has exponent $p(p^k -1)$.  Thus $U({\FF}_{p^k} C_p^n) = C_p ^{kp^n - k} \times C_{p^k -1}$.

\end{proof}

The following result appears in \cite{Higman}:

\begin{proposition}
If $KG$ is a semisimple group algebra and $K$ contains a primitive $m^{th}$-root of unity, 
where $m = exp (G)$ and $n = |G|$ then $KG \cong K^n$.
\end{proposition}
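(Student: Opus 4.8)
The plan is to reduce to the cyclic case and then invoke the Chinese Remainder Theorem. First I would observe that the conclusion forces $G$ to be abelian: the algebra $K^n$ is commutative, while $KG$ is commutative precisely when $G$ is abelian, so the statement is really a statement about finite abelian groups, and I would treat $G$ as such throughout.

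By the structure theorem for finite abelian groups I would write $G \cong C_{d_1} \times \cdots \times C_{d_r}$, where each $d_i$ divides $m = exp(G)$. The key structural fact I would use is the natural isomorphism $K(A \times B) \cong KA \otimes_K KB$ for groups $A$ and $B$, which lets me build $KG$ as an iterated tensor product $KC_{d_1} \otimes_K \cdots \otimes_K KC_{d_r}$. Since $K^a \otimes_K K^b \cong K^{ab}$, the full result would follow by induction on $r$ once the cyclic case is settled, because $|G| = d_1 \cdots d_r = n$.

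For the base case $G = C_d$ with $d \mid m$, I would use $KC_d \cong K[x]/(x^d - 1)$. Semisimplicity of $KG$ is equivalent (by Maschke's theorem and its converse) to $\mathrm{char}(K) \nmid |G|$, which forces $\mathrm{char}(K) \nmid d$; hence $x^d - 1$ is separable. Because $K$ contains a primitive $m$-th root of unity and $d \mid m$, it contains a primitive $d$-th root of unity $\omega$, so $x^d - 1 = \prod_{i=0}^{d-1}(x - \omega^i)$ splits into $d$ distinct linear factors. The Chinese Remainder Theorem then gives $K[x]/(x^d-1) \cong \prod_{i=0}^{d-1} K[x]/(x - \omega^i) \cong K^d$, as required.

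The part needing the most care is the bookkeeping of hypotheses under the reduction, rather than any single deep step. At each stage I must confirm that both semisimplicity and the root-of-unity condition are inherited by the factors: semisimplicity passes down because $\mathrm{char}(K) \nmid |G|$ together with $m \mid |G|$ implies $\mathrm{char}(K)$ divides none of the $d_i$, and the root-of-unity condition passes down because each $d_i$ divides $exp(G) = m$, so the primitive $m$-th root of unity supplies a primitive $d_i$-th root. An equivalent, more representation-theoretic route would assemble the $n$ distinct characters $\chi_1, \dots, \chi_n \colon G \to K^\times$ (there are exactly $n$ of them precisely because $K^\times$ then contains all $m$-th roots of unity) into an algebra map $\Phi = (\chi_1, \dots, \chi_n) \colon KG \to K^n$, and deduce that $\Phi$ is an isomorphism from the linear independence of characters together with the equality of dimensions. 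I would present the Chinese Remainder argument as the primary one, since it is the most elementary and self-contained.
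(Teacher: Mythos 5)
Your proof is correct, but there is essentially nothing in the paper to compare it with: the Proposition is stated as a quoted result of Higman and no proof is supplied. Your argument --- reduce to abelian $G$, write $G \cong C_{d_1} \times \cdots \times C_{d_r}$, use $K(A \times B) \cong KA \otimes_K KB$ and $K^a \otimes_K K^b \cong K^{ab}$, and settle the cyclic case by noting that $x^d - 1$ splits into $d$ distinct linear factors over $K$ (since $d \mid m$ gives a primitive $d$-th root of unity in $K$ and semisimplicity forces $\mathrm{char}(K) \nmid d$), so that the Chinese Remainder Theorem yields $K[x]/(x^d - 1) \cong K^d$ --- is the standard self-contained proof, and your bookkeeping of how semisimplicity and the root-of-unity hypothesis descend to the cyclic factors is accurate. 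The one point to phrase more carefully is the abelian hypothesis. As printed the Proposition omits it and is then false: $\CC S_3$ is semisimple and $\CC$ contains a primitive $6$-th root of unity, yet $\CC S_3 \cong \CC^2 \oplus M_2(\CC) \not\cong \CC^6$. You cannot recover a missing hypothesis by saying ``the conclusion forces $G$ to be abelian''; if a non-abelian $G$ satisfies the stated hypotheses, the implication simply fails for it. The honest move is to state explicitly that the result holds (and is only ever applied in this paper) for abelian $G$, and prove that version --- which is exactly what you then do. With that emendation your proof is complete; the character-theoretic alternative you sketch (the $n$ distinct characters $G \to K^\times$ assemble into an algebra map $KG \to K^n$ that is injective by linear independence of characters, hence an isomorphism by dimension count) is equally valid and is the same argument in dual form.
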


\section{The Unit Groups}

\bigskip

The Table gives the presentations of all unit groups of group algebras of order less that $1024$.  The cases where $p>3$ and $|G|<4$ are easily covered by the following remarks.  For $p>3, \; F_{p^n}C_1 \cong \FF_{p^n}, \; \FF_{p^n}C_2\cong \FF_{p^n}^2$. $\FF_{p^n}C_3 \cong \FF_{p^n}^3$ if $3|p^n-1$ by \cite{Higman}.  If $3\nmid p^n-1$ then $\FF_{p^n}C_3 \cong \FF_{p^n} \oplus \FF_{p^{2n}}$.
The first column of the Table gives the group algebra $KG$ and the second column gives the order $KG$ of the group algebra.
The third column gives the decomposition of the group algebra as the direct sum of matrix rings over fields
(in the semisimple cases) or as the direct sum of group algebras where useful in the non-semisimple cases.
Column four gives the size of the unit group $|U(KG)|$ and column five gives the structure of the unit group $U(KG)$, 
either by giving the presentation or as a direct sum of known groups.  
Table 1 is arranged into sections, with one section being used for each of the different fields $K$ used. Proofs for some of the commutative cases are contained in \cite{Danchev} and \cite{JoeThesis} and alternative proofs are included here for completeness.

\begin{proof}

${\FF}_{2^k} C_1 \cong {\FF}_{2^k} $ and has unit group $C_{2^k-1}$.
$U({\FF}_{2^k} C_2) \cong C_2 ^k \times  C_{2^k-1}$ by the Lemma.
For ${\FF}_{2^k} C_3$, if $3|2^k -1$ then by \cite{Higman}, ${\FF}_{2^k} C_3 \cong {\FF}_{2^k}^3$ and has unit group $C_{2^k-1}^3$. If $3\nmid 2^k -1$ then ${\FF}_{2^k} C_3 \cong {\FF}_{2^k} \oplus {\FF}_{2^{2k}}$ (since the unit group has an element of order 3) and $U({\FF}_{2^k} C_3) \cong C_{2^k-1} \times C_{2^{2k}-1}$.

$U({\FF}_2 C_1)$ is trivial. $U({\FF}_2 C_2) = C_2$ by the Lemma or \cite{Sandling}.  
Note that for a semisimple group algebra $KG$ with $K$ and $G$ finite, we have that $K$ appears at least once as a 
summand in the Artin-Wedderburn decomposition.
For ${\FF}_2 C_3$, Maschke's Theorem applies and the unit group contains an element of order $3$, 
so ${\FF}_2 C_3 \cong {\FF}_2 \oplus {\FF}_{2^2}$.
$U({\FF}_2 C_2 \times C_2) = C_2 ^3$ by the Lemma or \cite{Sandling}.
For ${\FF}_2 C_4$, note that if $\alpha \in U({\FF}_2 C_4)$ then $\alpha^4 = aug (\alpha) \in {\FF}_2$. 
Thus $\alpha \in U({\FF}_2 C_4)$ iff $aug (\alpha) =1$.  So $|U({\FF}_2 C_4)| = 8$. 
But $\alpha^4=1$ for all $\alpha \in U({\FF}_2 C_4)$ (and there is an element of order $4$, 
so $U({\FF}_2 C_4)$ is an abelian group of exponent $4$ and order $8$, so $U({\FF}_2 C_8) \cong C_2 \times C_4$.

For ${\FF}_2 C_5$, Maschke's Theorem applies, and ${\FF}_2$ appears as a summand.  But the unit group contains an element of
order $5$, so ${\FF}_{2^4}$ also appears.  Thus ${\FF}_2 C_5 \cong {\FF}_2 \oplus {\FF}_{2^4}$.
${\FF}_2 C_6 \cong {\FF}_2 C_3 \times C_2$. This is isomorphic to the group ring 
$({\FF}_2C_3)C_2 \cong ({\FF}_2 \oplus {\FF}_{2^2})C_2 \cong ({\FF}_2 C_2) \oplus ({\FF}_{2^2} C_2)$.
Thus $U({\FF}_2 C_6) \cong U({\FF}_2 C_2) \times U({\FF}_{2^2} C_2) \cong C_2 \times (C_2^2 \times C_3)$ by the Lemma.
Thus $U({\FF}_2 C_6) \cong C_2^3 \times C_3$.

For ${\FF}_2 D_6$, the unit group is calculated in \cite{Indians} as $D_{12}$ with the following presentation. 
If $D_6 = \langle x,y|x^3=y^1=1, yxy=x^2 \rangle $ and $w=1+x^2+y+xy+x^2y$ then $U({\FF}_2 D_6) \cong \langle w,y|w^6=y^2=1, ywy=w^5 \rangle $.

${\FF}_2 C_7$ is semisimple and ${\FF}_2$ appears as a direct summand in the decomposition.  But ${\FF}_2 C_7$ contains an
element of order $7$, so ${\FF}_{2^3}$ or ${\FF}_{2^6}$ appear as direct summands.  But for any $\alpha \in {\FF}_2 C_7$, 
$\alpha^8 = \alpha$, so the unit group has exponent 7.  Thus ${\FF}_2 C_7 \cong {\FF}_2 \oplus {\FF}_{2^3}^2 $ or
${\FF}_2 C_7 \cong {\FF}_2^4 \oplus {\FF}_{2^3} $.  But ${\FF}_2 C_7$ does not have just trivial units \cite{Danchev}, so
${\FF}_2 C_7 \cong {\FF}_2 \oplus {\FF}_{2^3}^2 $ and $U({\FF}_2 C_7) \cong C_7^2$.

$U({\FF}_2 C_2^3)\cong C_2^7$ by the Lemma or \cite{Sandling}.

For ${\FF}_2 C_2 \times C_4$, note that for all $\alpha \in {\FF}_2 C_2 \times C_4$, $\alpha^4 = aug (\alpha) \in {\FF}_2 $.
Thus $exp (U({\FF}_2 C_2 \times C_4)) =4$. Note also that $|U({\FF}_2 C_2 \times C_4)|=2^7$.  
Thus $U({\FF}_2 C_2 \times C_4) = C_2^5 \times C_4$ (with $2^6$ elements of order 2) or $C_2^3 \times C_4^2$ (with $2^5$ elements of order 2) or $C_2 \times C_4^3$ (with $2^4$ elements of order 2).
Next we count the number of elements of order $2$ in $U({\FF}_2 C_2 \times C_4)$.  
Let $C_2 \times C_4 = \langle x,y|x^4=y^2=1=[x,y]\rangle$.
Let $\alpha = a_0+a_1x+a_2x^2+a_3x^3+a_4y+a_5xy+a_6x^2y+a_7x^3y$. Then $\alpha^2=a_0+a_1x^2+a_2+a_3x^2+a_4+a_5x^2+a_6+a_7x^2 = (a_0+a_2+a_4+a_6)+(a_1+a_3+a_5+a_7)x^2 = 1 \Leftrightarrow (a_0+a_2+a_4+a_6)=1$ and $(a_1+a_3+a_5+a_7)=0$. This gives $8^2$ choices for the $a_i$, so there are $2^6$ elements of order 1 or 2. Thus $U({\FF}_2 C_2 \times C_4) = C_2^5 \times C_4$.

For ${\FF}_2 C_8$ note that if $\alpha \in {\FF}_2 C_8$ then $\alpha^8 = aug(\alpha) \in {\FF}_2$.  
Thus $\alpha$ is a unit iff $aug(\alpha)=1$ so $|U({\FF}_2 C_8)|=2^7$. Note that $U({\FF}_2 C_8)$ has exponent 8.
Thus $U({\FF}_2 C_8) \cong C_8 \times C_8 \times C_2$ (with $2^3$ elements of order 2) or $C_8\times C_4 \times C_4$
(also with $2^3$ elements of order 2) or $C_8\times C_4\times C_2 \times C_2$ (with $2^4$ elements of order 2) or $C_8\times C_2^4$ (with $2^5$ elements of order 2).  Next we count the number of elements of order 2 in $U({\FF}_2 C_8)$.
Let $ \alpha = \Sigma_{i=0}^7 a_ix^i \in U({\FF}_2 C_8)$ with $\alpha^2=1$.  Then $\alpha^2=\Sigma_{i=0}^7 a_i x^{2i} =
(a_0+a_4)1+(a_1+a_5)x^2+(a_2+a_6)x^4+(a_3+a_7)x^6=1$. This gives $2^4$ choices.  Hence $U({\FF}_2 C_8) \cong C_8\times C_4\times C_2 \times C_2$.

For ${\FF}_2 D_8$, let $D_8 = \langle x,y|x^4=y^2=1, [x,y]=x^2 \rangle$ and let $a = x+y+xy \in {\FF}_2 D_8$.  
Then $U({\FF}_2 D_8)$ is given in \cite{Sandling} as $U({\FF}_2 D_8) \cong 
\langle x,y,a| x^4=y^2=[a,x]^2=[a,y]^2=a^4=1, [x,y]=x^2, [a^2,x]=[a^2,y]=[a,x,y]=[x^2,a]=1 \rangle $.

For ${\FF}_2 Q_8$, let $Q_8 = \langle x,y|x^4=1, x^2=y^2, [x,y]=x^2 \rangle$ and let $a = x+y+xy \in {\FF}_2 Q_8$.  
Then $U({\FF}_2 Q_8)$ is given in \cite{Sandling} as $U({\FF}_2 Q_8) \cong 
\langle x,y,a| x^4=[a,x]^2=[a,y]^2=a^4=1, x^2=y^2, [x,y]=x^2, [a^2,x]=[a^2,y]=[a,x,y]=[x^2,a]=1 \rangle $.

${\FF}_2 C_3 \times C_3$ is isomorphic to the group ring $({\FF}_2 C_3)C_3$.  By a previous result ${\FF}_2 C_3 \cong {\FF}_2 \oplus {\FF}_{2^2}$. Thus  ${\FF}_2 C_3 \times C_3 \cong ({\FF}_2 \oplus {\FF}_{2^2})C_3 \cong {\FF}_2 C_3 \oplus {\FF}_{2^2}C_3 $.  By \cite{Higman} ${\FF}_{2^2}C_3 \cong {\FF}_{2^2}^3$.  Thus ${\FF}_2 C_3 \times C_3 \cong {\FF}_2 \oplus {\FF}_{2^2}^4$ and $U({\FF}_2 C_3 \times C_3) \cong C_3^4$.

For ${\FF}_2 C_9$, note that Maschke's Theorem applies and the unit group contains an element of order 9, so 
${\FF}_2 C_9 \cong {\FF}_2 \oplus {\FF}_{2^6} \oplus$ other summands. Also $\hat x, \hat {x^3} +\hat x$ and $x^3+x^6$ are orthogonal central idempotents, so ${\FF}_2 C_9 = {\FF}_2 C_9 \hat x \oplus {\FF}_2 C_9 (\hat {x^3} + \hat x) \oplus {\FF}_2 C_9 (x^3+x^6)$.  But ${\FF}_2 C_9 \hat x \cong {\FF}_2$ and ${\FF}_2 C_9 (\hat {x^3}+\hat x) \cong {\FF}_{2^2}$, so we must have ${\FF}_2 C_9 \cong {\FF}_2 \oplus {\FF}_{2^2} \oplus {\FF}_{2^6}$.

Next we consider group algebras where the coefficient field is ${\FF}_{2^2}$.
${\FF}_{2^2}C_1 \cong {\FF}_{2^2}$ and has unit group $C_3$.
By the Lemma, $U({\FF}_{2^2}C_2) \cong C_2^2 \times C_3$.
By \cite{Higman} ${\FF}_{2^2}C_3 \cong {\FF}_{2^2}^3$.

By the Lemma, $U({\FF}_{2^2}C_2 \times C_2) \cong C_2^6 \times C_3$.

For $U({\FF}_{2^2}C_4)$, note that if $\alpha \in {\FF}_{2^2}C_4$ then $\alpha^4=aug(\alpha) \in {\FF}_{2^2}$.
So the exponent of $U({\FF}_{2^2}C_4)$ is 12 and $|U({\FF}_{2^2}C_4)| = 4^3.3 = 2^6.3$. 
Thus $U({\FF}_{2^2}C_4) = C_2^4\times C_4\times C_3$ (with $2^5$ elements of order 2) or $C_2^2\times C_4^2 \times C_3$ (with $2^4$ elements of order 2) or $C_4^3\times C_3$ (with $2^3$ elements of order 2). Next we count the number of elements of order 2 in $U({\FF}_{2^2}C_4)$.  If $\alpha = \Sigma_{i=0}^3 a_i x^i \in {\FF}_{2^2}C_4$ then $\alpha^2=(a_0^2+a_2^2)+(a_1^2+a_3^2)x^2=1 \Leftrightarrow a_0^2+a_2^2=1$ and $a_1^2+a_3^2=0$, so these $2^2.2^2$ choices give $2^4$ elements of order 2. Thus $U({\FF}_{2^2}C_4) = C_2 ^2 \times C_4 ^2 \times C_3$.

Next we consider group algebras where the characteristic of the coefficient field is 3.
${\FF}_{3^k} C_1 \cong {\FF}_{3^k}$ so its unit group is $C_{3^k -1}$.
${\FF}_{3^k} C_2 \cong {\FF}_{3^k} \oplus {\FF}_{3^k}$ so its unit group is $C_{3^k -1}^2$.
$U({\FF}_{3^k} C_3) \cong C_3 ^{2k} \times C_{3^k -1}$ by the Lemma.

${\FF}_3 C_2 \times C_2 \cong {\FF}_3^4$ by \cite{Higman}.
For ${\FF}_3 C_4$ note that Maschke's Theorem applies and there is a unit of order 4, so ${\FF}_3 C_4 \cong {\FF}_3 \oplus {\FF}_{3^2} \oplus$ other summands.  Thus ${\FF}_3 C_4 \cong {\FF}_3^2 \oplus {\FF}_{3^2}$.
For ${\FF}_3 C_5$ note that Maschke's Theorem applies and there is a unit of order 5, so ${\FF}_3 C_5 \cong {\FF}_3 \oplus {\FF}_{3^4}$.
${\FF}_3 C_6 \cong {\FF}_3 C_2 \times C_3$ is isomorphic to the group ring $({\FF}_3 C_2) C_3 \cong ({\FF}_3\oplus {\FF}_3) C_3 \cong {\FF}_3 C_3 \oplus {\FF}_3 C_3$. So by a previous result it has unit group $C_3^4 \times C_2^2$.

$U({\FF}_3 D_6)$ is given in \cite{Indians} as 
$\langle v_1,v_2,v_3 | v_1^6=v_2^6=v_3^3=[v_1^3,v_2]=[v_1^3,v_3]=[v_2^2,v_1]=[v_2^2,v_3]=1, \; v_3 v_2 = v_1 v_2 v_1 v_3^2, \; v_3v_1=v_2v_1^5v_2^5v_3, \; v_2v_1=v_1^2v_2v_1^2v_2v_1v_2^{-1}v_1^2 \rangle$, where
$D_6=\langle x,y|x^3=y^2=1, \; [x,y]=x \rangle$ and $v_1=-x^2, \; v_2=1-x^2+y, \; v_3=1+(x-x^2)(1-y)$.
For an alternative approach to ${\FF}_{3^k} D_6$ see \cite{LandJ1}.

Next we consider group algebras where the characteristic of the coefficient field is $p>3$.
${\FF}_{p^n} C_1 \cong {\FF}_{p^n}$ so it has unit group $C_{p^n -1}$.
${\FF}_{p^n} C_2 \cong {\FF}_{p^n} \oplus {\FF}_{p^n}$ so it has unit group $C_{p^n -1}^2$.
For ${\FF}_{p^n} C_3$, if $3 | p^n -1$ then by the Proposition, ${\FF}_{p^n} C_3 \cong {\FF}_{p^n}^3$ and has unit group $C_{p^n -1}^3$. If $3 \nmid | p^n -1$ then clearly ${\FF}_{p^n} C3 \cong {\FF}_{p^n} \oplus {\FF}_{p^{2n}}$.
${\FF}_{p^n} C_2 \times C_2 \cong {\FF}_{p^n}^4$ by the Proposition.
For ${\FF}_{p^n} C_4$, if $4 | p^n -1$ then by the Proposition, ${\FF}_{p^n} C_4 \cong {\FF}_{p^n}^4$.
If $4 \nmid | p^n -1$ then ${\FF}_{p^n} C_4 \cong {\FF}_{p^n}^2 \oplus {\FF}_{p^{2n}}$ or ${\FF}_{p^n} \oplus {\FF}_{p^{3n}}$.  But $4 \nmid | p^{3n-1}$, giving ${\FF}_{p^n} C_4 \cong {\FF}_{p^n}^2 \oplus {\FF}_{p^{2n}}$.

\end{proof}
\begin{table}[p]
\begin{center}
    \begin{tabular}{ | c | c | c | c |c| }
          \hline
        $KG$ & $|KG|$ & Direct Sum & $|U(KG)|$ & $U(KG)$ \\ \hline \hline
        ${\FF}_2C_1$ & 2 & ${\FF}_2$ & 1 & $C_1$ \\ \hline
        ${\FF}_2C_2$ & 4 &  & 2 & $C_2$ \\ \hline
        ${\FF}_2C_3$ & 8 & ${\FF}_2\oplus {\FF}_{2^2}$ & 3 & $C_3$ \\ \hline
        ${\FF}_2C_2 \times C_2$ & 16 & $({\FF}_2 C_2)C_2$ & 8 & $C_2^3$ \\ \hline
        ${\FF}_2C_4$ & 16 & & 8 & $C_2 \times C_4$ \\ \hline
        ${\FF}_2C_5$ & 32 & ${\FF}_2 \oplus {\FF_{2^2}} $ & 15 & $C_{15}$ \\ \hline
        $\FF_2C_6$ & 64 & $\FF_2 C_2 \oplus \FF_{2^2}C_2$ & 24 & $C_2^3\times C_3$ \\ \hline
        $\FF_2D_6$ & 64 &  & 12 & $D_{12}$ \\ \hline
        $\FF_2C_7$ & 128 & $\FF_2 \oplus \FF_{2^3}^2$ & 49 & $C_7^2$ \\ \hline
        $\FF_2C_2^3$ & 256 &  & 128 & $C_2^7$ \\ \hline
        $\FF_2C_2\times C_4$ & 256 &  & 128 & $C_2^5 \times C_4$ \\ \hline
        $\FF_2C_8$ & 256 &  & 128 & $C_8\times C_4\times C_2^2$ \\ \hline
                $\FF_2D_8$ & 256 &  & 128 & $\langle x,y,a | x^4=y^2=[a,x]^2=[a,y]^2=a^4=1,$\\
                           &     &  &     & $[x,y]=x^2, [a^2,x]=[a^2,y]=[a,x,y]=[x^2,a]=1 \rangle $\\ 
                           &     &  &     & where $D_8=\langle x,y\rangle $ and a=x+y+xy \\ \hline
                $\FF_2Q_8$ & 256 &  & 128 & $\langle x,y,a | x^4=[a,x]^2=[a,y]^2=a^4=1,y^2=x^2,$\\
                           &     &  &     & $[x,y]=x^2, [a^2,x]=[a^2,y]=[a,x,y]=[x^2,a]=1 \rangle $\\ 
                           &     &  &     & where $Q_8=\langle x,y\rangle $ and a=x+y+xy \\ \hline             
        $\FF_2C_3^2$ & 512 & $\FF_2 \oplus \FF_{2^2}^4$ & 81 & $C_3^4$ \\ \hline
        $\FF_2C_9$ & 512 & $\FF_2 \oplus \FF_{2^2} \oplus F_{2^6}$ & 189 & $C_3\times C_{63}$ \\ \hline \hline
        ${\FF}_{2^2}C_1$ & 4 & ${\FF}_{2^2}$ & 3 & $C_3$ \\ \hline
        ${\FF}_{2^2}C_2$ & 16 &  & 12 & $C_2^2 \times C_3$ \\ \hline
        ${\FF}_{2^2}C_3$ & 64 & ${\FF}_{2^2}^3$ & 27 & $C_3^3$ \\ \hline
        ${\FF}_{2^2}C_2^2$ & 256 & & 192 & $C_2^6 \times C_3$ \\ \hline
        ${\FF}_{2^2}C_4$ & 256 & & 192 & $C_2^2\times C_4^2 \times C_3$ \\ \hline
   			\hline
   			${\FF}_3C_1$ & 3 & ${\FF}_3$ & 2 & $C_2$ \\ \hline
   			${\FF}_3C_2$ & 9 & ${\FF}_3^2$ & 4 & $C_2^2$ \\ \hline
   			${\FF}_3C_3$ & 27 &  & 18 & $C_3^2 \times C_2$ \\ \hline
   			${\FF}_3C_2\times C_2$ & 81 & ${\FF}_3^4$ & 16 & $C_2^4$ \\ \hline
   			${\FF}_3C_4$ & 81 & ${\FF}_3^2 \oplus \FF_{3^2}$ & 32 & $C_2^2 \times C_8$ \\ \hline
   			${\FF}_3C_5$ & 243 & ${\FF}_3\oplus \FF_{3^4}$ & 160 & $C_2\times C_{80}$ \\ \hline
   			${\FF}_3C_6$ & 729 &  & 324 & $C_3^4 \times C_2^2$ \\ \hline
   						${\FF}_3D_6$ & 729 &  & 324 & $\langle v_1,v_2,v_3 | v_1^6=v_2^6=v_3^3=[v_1^3,v_2]=[v_1^3,v_3]=$\\ 
   						           &     &  &     &  $[v_2^2,v_1]=[v_2^2,v_3]=1, v_3 v_2 = v_1 v_2 v_1 v_3^2,$ \\
   						           &     &  &     &  $v_3v_1=v_2v_1^5v_2^5v_3, \; v_2v_1=v_1^2v_2v_1^2v_2v_1v_2^{-1}v_1^2 \rangle$\\
	                      &     &  &     & where $D_6=\langle x,y|x^3=y^2=1, \; [x,y]=x \rangle$ and \\
                         &     &  &     &  $v_1=-x^2, \; v_2=1-x^2+y, \; v_3=1+(x-x^2)(1-y)$ \\ \hline
   			\hline
   			${\FF}_{3^2}C_1$ & 9 & $\FF_{3^2}$ & 8 & $C_8$ \\ \hline
   			${\FF}_{3^2}C_2$ & 81 & $\FF_{3^2}^2$ & 64 & $C_8^2$ \\ \hline
   			${\FF}_{3^2}C_3$ & 729 &  & 648 & $C_3^4\times C_8$ \\ \hline
   		  \hline
   			${\FF}_5C_1$ & 5 & $\FF_5$ & 4 & $C_4$ \\ \hline
   			${\FF}_5C_2$ & 25 & $\FF_5^2$ & 16 & $C_4^2$ \\ \hline
   			${\FF}_5C_3$ & 125 & $\FF_5\oplus \FF_{5^2}$ & 96 & $C_4\times C_{24}$ \\ \hline
   			${\FF}_5C_2\times C_2$ & 625 & $\FF_5^4$ & 256 & $C_4^4$ \\ \hline
   			${\FF}_5C_4$ & 625 & $\FF_5^4$ & 256 & $C_4^4$ \\ \hline		
\end{tabular}
\end{center}

\end{table}

\pagebreak

\end{document}